\pgfplotsset{compat=1.11}
\renewcommand{\thefootnote}{\fnsymbol{footnote}}
\newcommand{\definedas}{\mathrel{\raise.095ex\hbox{\rm :}\mkern-5.2mu=}}
\newcommand{\R}{\mathbb{R}}
\newcommand{\Sbb}{\mathbb{S}}
\renewcommand{\d}{\,\mathrm{d}}
\newcommand{\ul}[1]{\underline{#1}}
\newcommand{\btr}[1]{\left\vert#1\right\vert}
\newcommand{\newbtr}[1]{\vert#1\vert}
\newcommand{\spann}[1]{\left\langle#1\right\rangle}
\newcommand{\Ric}{\mathrm{Ric}}
\newcommand{\Rm}{\mathrm{Rm}}
\newcommand{\two}{\operatorname{II}}
\newcommand{\tr}{\text{tr}}
\theoremstyle{plain}
\newtheorem{thm}{Theorem}[section]
\newtheorem{prop}[thm]{Proposition}
\newtheorem{lem}[thm]{Lemma}
\theoremstyle{definition}
\newtheorem{bem}[thm]{Remark}
\newtheorem{kor}[thm]{Corollary}
\begin{document}
		\begin{center}
			{\LARGE {An area growth argument for null mean curvature flow along the standard de\,Sitter lightcone}\par}
		\end{center}
		\vspace{0.5cm}
		\begin{center}
			{\large Markus Wolff\footnote[2]{markuswo@kth.se}}\\
			\vspace{0.4cm}
			{\large Department of Mathematics}\\
			{\large KTH Royal Institute of Technology}
		\end{center}
		\vspace{0.4cm}
		\begin{abstract}
			We consider null mean curvature flow along the standard lightcone in the de\,Sitter spacetime. This flow was first studied by Roesch--Scheuer along null hypersurfaces for the detection of MOTS, and independently by the author in the specific case of the standard Minkowski lightcone. Similar to the Minkowski case, null mean curvature flow along the de\,Sitter lightcone can be related to $2d$-Ricci flow for surfaces of genus $0$ by an appropriate rescaling. Building on this rescaling procedure, we analyse singularity formation, asymptotic behavior and ancient solutions to the flow.
		\end{abstract}
		\renewcommand{\thefootnote}{\arabic{footnote}}
		\setcounter{footnote}{0}
	\section{Introduction}
		For a given null hypersurface $\mathcal{N}$ in an ambient spacetime $(M,g)$ and a choice of null generator $\ul{L}$, we say a family of spacelike cross sections $(\Sigma_t)$ smoothly evolves along $\mathcal{N}$ under what we will call \emph{null mean curvature flow} here if their exists a smooth embedding $x\colon (0,T)\times \Sigma_0\to \mathcal{N}$ such that $\Sigma_t=x(t,\Sigma_0)$ and 
		\[
			\frac{\d }{\d t}x=\frac{1}{2}g(\vec{\mathcal{H}},L)\ul{L},
		\]
		where $\vec{\mathcal{H}}$ denotes the codimension-$2$ mean curvature vector of the spacelike cross sections in $(M,g)$ and $L$ is the unique null vector field normal to the spacelike cross sections satisfying $g(\ul{L},L)=2$. Note that null mean curvature flow is the projection of codimension-$2$ mean curvature flow onto $\mathcal{N}$ in direction of the null generator $\ul{L}$, and that the flow is at least locally always equivalent to a scalar parabolic equation, cf. \cite{roeschscheuer}.
		
		This extrinsic flow was first studied by Roesch--Scheuer \cite{roeschscheuer} to detect \emph{marginally outer trapped surfaces} (MOTS). Such surfaces are models for apparent horizons in null hypersurfaces and in particular arise as the intersection between the null hypersurface and a (Killing) horizon in the ambient spacetime. Under some mild assumptions on the choice of null generator $\ul{L}$ and assuming that appropriate barriers exists, Roesch--Scheuer show that the flow exists for all times and converges smoothly to a MOTS, cf. \cite[Theorem 1.1]{roeschscheuer}. In particular, their assumptions are satisfied for the lightcone in the Schwarzschild spacetime (of positive mass). In a recent paper \cite{wolff1}, the author further considered the flow in the round Minkowski lightcone. No closed MOTS exist in the Minwkowski spacetime and one thus expects the flow to develop singularities in finite time. This is indeed the case, as one can show the equivalence between the extrinsic null mean curvature flow in the Minkowski lightcone and $2d$-Ricci flow in the conformal class of the round metric using the Gauss equation. Thus, a classical result first proven by Hamilton \cite{hamilton1} leads to a complete understanding of the singularity formations in the case of the round Minkowski lightcone, cf. \cite[Corollary 11]{wolff1}.
		
		In this paper, we consider null mean curvature flow in the standard lightcone of the de\,Sitter spacetime. The de\,Sitter spacetime is a vacuum solution of the Einstein Equation with positive cosmological constant $\Lambda=1$ and serves as s cosmological model for Einstein's Theory of General Relativity. Here, we consider the de\,Sitter spacetime as a static, spherically symmetric spacetime $(M,g)$ of the form
		\begin{align*}
			M&=\R\times(0,1)\times\Sbb^2,\\
			g&=-(1-r^2)\d t^2+\frac{1}{1-r^2}\d r^2+r^2\d\Omega^2.
		\end{align*}
		As we want to consider the lightcone for all positive radii in $(0,\infty)$, we will extend the spacetime beyond the cosmological Killing horizon $\{r=1\}$ into a generalized Kruskal--Szekeres spacetime covered by double null coordinates, cf. \cite{cedwolff}. On the other hand, one can think of the de\,Sitter spacetime as the \emph{one-sheeted hyperboloid} or \emph{pseudosphere} $\mathbb{H}_1^{1,3}$ in the $1+4$\,-dimensional Minkowski spacetime $\R^{1,4}$. It is easy to see that the static coordinates as above do not cover all of $\mathbb{H}_1^{1,3}$, and that similar to the generalized Kruskal--Szekeres extension $\mathbb{H}_1^{1,3}$ contains two copies of $(M,g)$. Indeed, one can see that the generalized Kruskal--Szekeres extension recovers a double null coordinate system that covers all of $\mathbb{H}_1^{1,3}$ (expect for a set of measure zero). See Section \ref{sec_discussion} below for more details.
		
		As the de\,Sitter spacetime admits a similar metric structure as the Minkowski spacetime in the above coordinates, several properties of the standard Minkowski lightcone carry over to the standard de\,Sitter lightcone. More generally, we recall the well-known fact that the principal null hypersurfaces in a class $\mathcal{S}$ of static, spherically symmetric spacetimes, cf. Cederbaum--Galloway \cite{cedgal}, yield a wide array of examples of shear-free lightcones with conformally round cross sections, see Section \ref{sec_lightcone}. Spacetimes of class $\mathcal{S}$ and generalizations have been widely studied, see e.g. \cite{birmi, brillhayward,cedgal, cedwolff,chenwang, chrugalpot,kottler, schindagui, wangwangzang}.
		In particular, the Gauss Equation for a spacelike cross sections $\Sigma$ of the de\,Sitter lightcone yields
		\begin{align}\label{eq_intro1}
			\mathcal{H}^2=2\operatorname{R}-4,
		\end{align}
		where $\mathcal{H}^2:=g(\vec{\mathcal{H}},\vec{\mathcal{H}})$ is the \emph{spacetime mean curvature}, and $\operatorname{R}$ denotes the scalar curvature of $\Sigma$, respectively, see Corollary \ref{kor_gaussdesitter}. Similar to the case of the Minkowski lightcone, the Gauss Equation \eqref{eq_intro1} now infers that, while not directly equivalent, null mean curvature flow can be related to $2d$-Ricci flow in the conformal class of the round sphere upon a suitable rescaling. This again yields a complete understanding of the asymptotic behavior and the formation of singularities for null mean curvature flow along the de\,Sitter lightcone, see Theorem \ref{thm_main1}. Unlike the case of the Minkowski lightcone, we show that not all solutions develop singularities in finite time, but may exists for all times and either converge smoothly to a round MOTS, or expand towards infinity without becoming round. Our analysis of the flow relies heavily on an explicit formula of the area $\btr{\Sigma_t}$, which gives us precise control over the timescale of the rescaled equation.
		
		Note that by \eqref{eq_intro1}, any MOTS in the de\,Sitter lightcone is given as a conformally round surface with constant scalar curvature $\operatorname{R}=2$, and thus area $4\pi$, and which arises as the intersection between the lightcone and a cosmological Killing horizon. In particular, unlike the cases considered by Roesch--Scheuer \cite{roeschscheuer}, any MOTS in the de\,Sitter lightcone turns out to be unstable by Theorem \ref{thm_main1} in the following sense: Any slight variation $\Sigma_0$ of a MOTS such that $\btr{\Sigma_0}\not=4\pi$ will completely detach itself from the MOTS and either shrink towards the tip of the cone in finite time or expand towards infinity.
		
		Additionally, we consider ancient solutions of null mean curvature flow on the de\,Sitter lightcone, where we call a solution ancient if it exists for all negative times as usual. As before, we utilize the rescaled equation to relate any ancient solution of the flow to a conformally round ancient solution of $2d$-Ricci flow. Apart from the flat, eternal solution, i.e., a solution that exists for all times, all compact ancient solutions of $2d$-Ricci flow are conformally round and have been fully classified as either shrinking spheres or a King--Rosenau solution by Daskalopoulos--Hamilton--Sesum \cite{daskahamilsesum}. Arguing by rescaling, their analysis also leads to a full classification of ancient solution to null mean curvature flow, see Theorem \ref{thm_main2}. More precisely, there are $6$ ancient solutions of null mean curvature flow, of which $4$ are eternal solutions, and such that each of them corresponds either to a family of shrinking spheres or a King--Rosenau solution upon rescaling.
		
		We briefly comment on the higher dimensional case and the case of the round lightcone in the Anti de\,Sitter spacetime in Section \ref{sec_discussion}.\newpage
		
		This paper is structured as follows:\newline
		In Section \ref{sec_prelim} we fix our notation and recall some preliminaries. In Section \ref{sec_lightcone} we discuss shear-free null hypersurfaces, which we will call (standard) lightcones here, in a class $\mathcal{S}$ of static spacetimes and their generalized Kruskal--Szekeres extension, in particular in the case of the de\,Sitter spacetime. We discuss null mean curvature flow along the de\,Sitter lightcone in Section \ref{sec_nullMCF}, and prove our main results Theorem \ref{thm_main1} and Theorem \ref{thm_main2}. We close with some comments on the relation between the generalized Kruskal--Szekeres extension and the pseudosphere $\mathbb{H}_1^{1,3}$, the higherdimensional case, and the case of the Anti de\,Sitter lightcone in Section \ref{sec_discussion}.
		
		\subsection*{Acknowledgements.}
		I would like to express my sincere gratitude towards Carla Cederbaum and Gerhard Huisken for their continuing guidance and helpful discussions.
		Additional thanks to Giorgos Vretinaris for his enlightening figure.
		\setcounter{section}{1}
		
\section{Preliminaries}\label{sec_prelim}
	For the purpose of this paper, $(\Sigma,\gamma)$ will always denote a spacelike, closed, orientable surface of codimension-$2$ in an ambient spacetime $(M,g)$. Recall that for such a spacelike, codimension-$2$ surface, the \emph{vector-valued second fundamental form} of $\Sigma$ in $(M,g)$ is defined as
	\[
	\vec{\two}(V,W)=\left(\overline{\nabla}_VW\right)^\perp
	\]
	for all tangent vector fields $V, W\in\Gamma(T\Sigma)$, where $\overline{\nabla}$ denotes the Levi-Civita connection of $(M,g)$. Further, the \emph{codimension-$2$ mean curvature vector} $\vec{\mathcal{H}}$ of $\Sigma$ is given by the trace of $\vec{\two}$ with respect to $\gamma$, i.e, $\vec{\mathcal{H}}=\tr_\gamma\vec{\two}$. Additionally, we define the \emph{spacetime mean curvature} $\mathcal{H}^2$ as the Lorentzian length of $\vec{\mathcal{H}}$, i.e., 
	\[
		\mathcal{H}^2=g(\vec{\mathcal{H}},\vec{\mathcal{H}}).
	\]
	If $\vec{\mathcal{H}}$ is spacelike, this agrees with the notion of spacetime mean curvature by Ceder\-baum--Sakovich \cite{cederbaumsakovich} upon taking a square root. However, as $\vec{\mathcal{H}}$ will in general have no  fixed causal character, $\mathcal{H}^2$ can be at least locally negative. Indeed, \emph{trapped surfaces}, where $\vec{\mathcal{H}}$ is timelike everywhere along $\Sigma$, and hence $\mathcal{H}^2<0$ globally, will naturally occur in the de\,Sitter lightcone, cf. Remark \ref{bem_gaussdesitter} (ii). Additionally, we call $\Sigma$ a surface of constant spacetime mean curvature (STCMC surface) if $\mathcal{H}^2$ is constant along $\Sigma$, cf. \cite{cederbaumsakovich}.
	
	For a choice of null frame $\{\ul{L},L\}$ with $g(\ul{L},L)=2$, $\vec{\operatorname{II}}$ and $\vec{\mathcal{H}}$ admit the decomposition 
	\begin{align}
	\begin{split}
	\vec{\operatorname{II}}&=-\frac{1}{2}\chi\underline{L}-\frac{1}{2}\underline{\chi}L,\\
	\vec{\mathcal{H}}&=-\frac{1}{2}\theta\underline{L}-\frac{1}{2}\underline{\theta}L,
	\end{split}
	\end{align}
	where the null second fundamental forms $\underline{\chi}$ and $\chi$ with respect to $\underline{L}$ and $L$, respectively, are defined as
	\begin{align*}
	\underline{\chi}(V,W):=-g\left( \overline{\nabla}_VW,\underline{L} \right)&\,&
	{\chi}(V,W)&:=-g\left( \overline{\nabla}_VW,L \right),
	\end{align*}
	for tangent vector fields $V,W\in \Gamma(T\Sigma)$, and the null expansions $\underline{\theta}$ and $\theta$ with respect to $\underline{L}$ and $L$, respectively, as
	\begin{align*}
	\underline{\theta}:= \operatorname{tr}_\gamma\underline{\chi},&\,&
	{\theta}:= \operatorname{tr}_\gamma{\chi}.
	\end{align*}
	In particular
	\begin{align}\label{eq_secondffnulldecomp}
	\mathcal{H}^2&=\underline{\theta}\theta.
	\end{align}
	Further, we define the connection-$1$ form $\zeta$ as 
	\[
	\zeta(V):=\frac{1}{2}g\left(\overline{\nabla}_V\underline{L},L \right).
	\]
	Throughout this paper, we will usually assume that $\Sigma$ arises as a spacelike cross section of a null hypersurface $\mathcal{N}$ and such that $\ul{L}$ corresponds to a choice of null generator of $\mathcal{N}$. In this case, we additionally assume that any integral curve of $\ul{L}$ in $\mathcal{N}$ intersects $\Sigma$ exactly once. In the next section, this will allow us to explicitly derive the above objects in the lightcones under consideration. However,  one should point out that the explicit formulas that we derive in this special case build on a framework that is more generally available for null hypersurfaces. For more details and further references, we refer the interested reader to \cite{marssoria, roeschscheuer, sauter}.
	
	Note that we use the following conventions for the Riemann curvature tensor $\operatorname{Rm}$, Ricci curvature tensor $\operatorname{Ric}$ and scalar curvature $\operatorname{R}$, respectively:
	\begin{align*}
	\Rm(X,Y,W,Z)&=\spann{\nabla_X\nabla_YZ-\nabla_Y\nabla_XZ-\nabla_{[X,Y]}Z,W},\\
	\Ric(X,Y)&=\tr\Rm(X,\cdot,Y,\cdot),\\
	R&=\tr \Ric.
	\end{align*}

\section{The de\,Sitter lightcone}\label{sec_lightcone}
	We now introduce the (standard) de\,Sitter lightcone on which we want to perform our analysis of null mean curvature flow. In particular, we define it as a \emph{shear-free null hypersurface} that arises in a suitable spacetime extension in double null coordinates. In fact, we consider such lightcones more generally in a class $\mathcal{S}$ of spherically symmetric spacetimes, cf. \cite{cedgal}. 
	
	Now, let $h\colon (0,\infty)\to\mathbb{R}$ be a smooth function with finitely many, positive zeros \linebreak${r_0:=0<r_1<\dotsc<r_N<r_{N+1}:=\infty}$. Then, we say that $(M_i,g)$ with
	\begin{align*}
		M_i&:=\R\times(r_{i-1},r_i)\times\Sbb^2,\\
		g&:=-h(r)\d t^2+\frac{1}{h(r)}\d r^2+r^2\d\Omega^2
	\end{align*}
	is a \emph{spacetime of class $\mathcal{S}$} for $i\in\{1,\dotsc,N+1\}$, where $\d\Omega^2$ denotes the standard round metric on $\Sbb^2$. Spacetimes in Class $\mathcal{S}$ and generalizations have been extensively studied, see e.g. \cite{birmi, brillhayward,cedgal, cedwolff,chenwang, chrugalpot,kottler, schindagui, wangwangzang}. Assuming that the null energy is satisfied, Wang--Wang--Zhang \cite{wangwangzang} prove a spacetime Alexandrov Theorem in Class $\mathcal{S}$ characterizing shear-free null hypersurfaces, and Chen-Wang \cite{chenwang} provide a characterization of STCMC surfaces on these shear-free null hypersurfaces. Their analysis is performed in (global) $(t,r)$-coordinates on a spacetime $(M_i,g)$. These coordinates fail for any zero $r_i$ and the sets $\{r=r_i\}$ correspond to Killing horizons (in a suitable spacetime extension). If $h'(r_i)\not=0$, which corresponds to the fact that the Killing horizon $\{r=r_i\}$ is \emph{non-degenerate}, cf. \cite[Equation (12.5.16)]{wald}, there exists a generalized Kruskal--Szekeres extension joining $M_{i}$, $M_{i+1}$ as first constructed in the general case by Brill--Hayward \cite{brillhayward}, and independently by Schindler--Aguirre \cite{schindagui}, and Carla Cederbaum and the author \cite{cedwolff}. In the specific case of the de\,Sitter spacetime, Kruskal coordinates were also considered in \cite{gibbhawk}. We briefly introduce the construction in \cite{cedwolff} as we want to study the shear-free null hypersurfaces considered in \cite{wangwangzang,chenwang} in the resulting double null coordinates both for notational convenience as well as for the fact that they naturally extend past the Killing horizon in these coordinates.
	
	In \cite{cedwolff}, the contruction of a generalized Kruskal--Szekeres extension is reduced to finding a strictly increasing solution $f_i$ of 
	\begin{align}\label{eq_ODEkruskal}
		\frac{f_i(r)}{f'_i(r)}=K_ih(r)
	\end{align}
	on $(r_{i-1},r_{i+1})$ with $K_i=\frac{1}{h'(r_i)}\not=0$. By \cite[Proposition 3.2]{cedwolff} a solution exists and is unique (up to a multiplicative constant) if and only if $h'(r_i)\not=0$. Then $M_i$, $M_{i+1}$ are contained in the spacetime extension $(\mathbb{P}_h^i\times\Sbb^2,\widetilde{g})$, where
	\begin{align*}
		\mathbb{P}_h^i&=\{(u,v)\in\R^2\colon u\cdot v\in \operatorname{Im}(f_i)\},\\
		\widetilde{g}&=(F_i\circ\rho)\left(\d u\d v+\d v\d u\right)+\rho^2\d\Omega^2,
	\end{align*}
	with $F_i(r):=\frac{2K_i}{f'_i(r)}$, $\rho(u,v)=f^{-1}(u\cdot v)$, \cite[Theorem 3.8]{cedwolff}. We note that it is immediate to see that the level-sets of coordinate functions $u$, $v$ are null hypersurface as they carry the degenerate induced metric
	\[
		\rho^2\d\Omega^2.
	\]
	Moreover, the sets $\{u=0\}$, $\{v=0\}$ are contained in the Killing horizon $\{\rho=r_i\}$. For $c>0$, we call the level-sets $\{v=c\}$, $\{u=c\}$ a past-pointing and future-pointing lightcone, respectively. As the constructed spacetime extension contains in fact two copies of $M_i$, $M_{i+1}$ separated either by the axis $\{v=0\}$ or the axis $\{u=0\}$, it suffices to consider the level-sets for $c>0$, as all level sets of $u$ and $v$ for $c<0$ arise by point reflection. In the following, we will only consider a (past-pointing) lightcone $\mathcal{N}=\{v=c\}$, but all identities can be derived for level sets of $u$ in complete analogue.
	
	Following the computations by Roesch \cite[Section 2.2]{roesch} in the case of the Schwarzschild lightcone, we derive all necessary identies for a past-pointing lightcone $\mathcal{N}$. As stated above, for any $c>0$, $\mathcal{N}=\{v=c\}$ is a null hypersurface with degenerate induced metric 
	\[
		\rho^2\d\Omega^2,
	\]
	where $\rho=f^{-1}(u\cdot c)$ along $\mathcal{N}$. Moreover, $\mathcal{N}$ has null generator $\ul{L}=\frac{2K_i}{cF_i}\partial_u$ since \linebreak ${\ul{L}=2K_i\operatorname{grad}(\ln(v))\vert_\mathcal{N}}$. In particular, $\ul{L}(\rho)=1$ and the integral curves of $\ul{L}$ are null geodesics. Hence, $\mathcal{N}$ admits a \emph{background foliation} of round spheres $\Sbb^2_r=\left\{\frac{f_i(r)}{c}\right\}\times\{c\}\times\Sbb^2\subseteq \mathcal{N}$ for $r\in(r_{i-1},r_{i+1})$. Using the explicit identities for the Christoffel symbols as derived in \cite[Proposition B.1]{cedwolff}, it is easy to see that the induced metric $\gamma_r$ and null second fundamental form $\ul{\chi}_r$ (with respect to $\ul{L}$) of the leaves $\Sbb^2_r$ are given by
	\begin{align*}
		\gamma_r&=r^2\d\Omega^2,\\
		\ul{\chi}_r&=r\d\Omega^2.
	\end{align*}
	In particular, $\accentset{\circ}{\ul{\chi}}_r\equiv 0$, and $\mathcal{N}$ is indeed a \emph{shear-free} null hypersurface, cf. \cite{sauter}. Moreover, as it is the case for the Minkowski and Schwarzschild lightcone, see \cite{wolff1} and \cite{roesch} respectively, we note that any spacelike cross section $\Sigma$ of $\mathcal{N}$, i.e., a $2$-dimensional, spacelike submanifold of $\mathcal{N}$ that intersects any integral curve of $\ul{L}$ exactly once, can be uniquely identified with a conformally round metric. To see this, recall that $\ul{L}(\rho)=1$, so $\rho$ restricts to an affine parameter along $\mathcal{N}$, and we may thus uniquely identify $\Sigma$ with a function $\omega\colon \Sbb^2\to(r_{i-1},r_{i+1})$ via
	\[
		\Sigma=\Sigma_\omega=\{(u,v,\vec{x})\in \mathbb{P}^i_h\times\Sbb^2\colon v=c,\rho=f^{-1}(u\cdot c)=\omega(\vec{x})\}.
	\]
	Using \cite[Proposition 1]{marssoria}, we find that the induced metric $\gamma=\gamma_\omega$ and null second fundamental form $\ul{\chi}=\ul{\chi}_\omega$ (with respect to $\ul{L}$) of $\Sigma=\Sigma_\omega$ satisfy
	\begin{align*}
		\gamma_\omega&=\omega^2\d\Omega^2,\\
		\ul{\chi}_\omega&=\omega\d\Omega^2.
	\end{align*}
	In particular, we obtain the following:
	\begin{lem}\label{lem_gausslightcone}
		Let $\Sigma$ be a spacelike cross section of the past-pointing lightcone $\mathcal{N}$. Then
		\[
			\mathcal{H}^2=2\operatorname{R}-2\overline{\operatorname{R}}+4\overline{\Ric}(\ul{L},L)-\overline{\Rm}(\ul{L},L,L,\ul{L}),
		\]
		where $\overline{\Rm}$, $\overline{\Ric}$, $\overline{\operatorname{R}}$ denote the Riemann curvature tensor, Ricci tensor, and scalar curvature of the ambient spacetime, respectively.
	\end{lem}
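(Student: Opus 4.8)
The goal is to express the spacetime mean curvature $\mathcal{H}^2 = \underline{\theta}\theta$ of a spacelike cross section $\Sigma$ of the lightcone purely in terms of the intrinsic scalar curvature $\operatorname{R}$ of $\Sigma$ and the ambient curvature quantities contracted with the null frame. The natural tool is the \emph{Gauss equation} for a codimension-$2$ spacelike surface, which relates the intrinsic curvature of $\Sigma$ to the ambient Riemann curvature plus quadratic terms in the second fundamental form. The plan is to write the codimension-$2$ Gauss equation in the null frame $\{\underline{L}, L\}$ normalized by $g(\underline{L},L)=2$, identify which terms survive, and then exploit the \emph{shear-free} structure established above — namely that along $\mathcal{N}$ the leaves satisfy $\underline{\chi}_\omega = \omega\,\d\Omega^2$, so $\underline{\chi}$ is \emph{pure trace} and its trace-free part $\accentset{\circ}{\underline{\chi}}$ vanishes.

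\medskip

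\noindent First I would record the traced Gauss equation in the form
\[
	\operatorname{R} = \overline{\operatorname{R}} - 2\,\overline{\Ric}(\nu,\nu) + \text{(second fundamental form terms)},
\]
but since the normal bundle is spanned by two null directions rather than one timelike and one spacelike normal, I would instead contract the full Gauss equation $\Rm^\Sigma = (\overline{\Rm})^{TT} + \vec{\two}\owedge\vec{\two}$ over the tangent indices using the induced metric $\gamma$. Tracing twice produces, on the ambient side, the combination $\overline{\operatorname{R}} - 2\,\overline{\Ric}(\underline{L},L) + \overline{\Rm}(\underline{L},L,L,\underline{L})$ up to the normalization constants coming from $g(\underline{L},L)=2$; this accounts for the three ambient terms appearing in the statement (with the signs and factors $2$ and $4$ to be fixed by the normalization). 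On the second-fundamental-form side, the decomposition $\vec{\two} = -\tfrac12\chi\underline{L} - \tfrac12\underline{\chi}L$ feeds in the scalar invariants $\underline{\theta}\theta$ and $\langle\underline{\chi},\chi\rangle_\gamma$ together with their trace-free counterparts.

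\medskip

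\noindent The key simplification is that $\accentset{\circ}{\underline{\chi}} \equiv 0$: because $\underline{\chi}_\omega = \omega\,\d\Omega^2 = \tfrac{1}{\omega}\gamma_\omega$ is conformal to the induced metric, its trace-free part vanishes identically. Consequently every inner-product term of the form $\langle \accentset{\circ}{\underline{\chi}}, \accentset{\circ}{\chi}\rangle_\gamma$ drops out, and the remaining pure-trace contraction collapses precisely to $\tfrac12\underline{\theta}\theta = \tfrac12\mathcal{H}^2$ by \eqref{eq_secondffnulldecomp}. Solving the traced identity for $\mathcal{H}^2$ then isolates it on the left and leaves exactly the four ambient curvature terms on the right, yielding the claimed formula.

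\medskip

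\noindent I expect the main obstacle to be \emph{bookkeeping of the normalization constants and signs}: with a null frame normalized to $g(\underline{L},L)=2$ rather than the more standard $g(\underline{L},L)=-2$ or $=1$, the factors multiplying $\overline{\Ric}(\underline{L},L)$ and $\overline{\Rm}(\underline{L},L,L,\underline{L})$ must be tracked carefully through each contraction, since it is these constants that produce the asymmetric coefficients $2$, $2$, $4$, $1$ in the final expression. The cleanest way to control this is to build an orthonormal-type frame $\{e_1,e_2\}$ for $T\Sigma$ together with the null normals, express the trace $\gamma^{AB}\gamma^{CD}$ contractions explicitly in that frame, and verify that the purely tangential ambient term reassembles into $\overline{\operatorname{R}}$ minus the normal–tangent and normal–normal pieces via the standard index-splitting identity $\overline{\operatorname{R}} = \operatorname{R}^{\text{tang}} + 2\,\overline{\Ric}(\text{normals}) - \overline{\Rm}(\text{normals})$ appropriately weighted. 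Once the null-frame trace identities are set up, the shear-free condition does the remaining work and the computation is routine.
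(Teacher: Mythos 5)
Your proposal follows essentially the same route as the paper: the paper's proof simply observes that shear-freeness ($\accentset{\circ}{\ul{\chi}}\equiv 0$) forces $\newbtr{\vec{\two}}^2=\spann{\ul{\chi},\chi}=\tfrac{1}{2}\ul{\theta}\theta=\tfrac{1}{2}\mathcal{H}^2$, and then invokes the twice-traced codimension-$2$ Gauss equation in the null frame (citing Roesch) to conclude. Your plan is correct and identifies exactly this key simplification; the only difference is that you propose to rederive the traced Gauss equation and its normalization constants by hand rather than quoting it.
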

	\begin{proof}
		As $\ul{\chi}\equiv0$, we note that
		\[
			\newbtr{\vec{\two}}^2=\spann{\ul{\chi},\chi}=\frac{1}{2}\ul{\theta}\theta=\frac{1}{2}\mathcal{H}^2.
		\]
		The claim then directly follows from the Gauss Equation, cf. \cite[Proposition 2.1 (3)]{roesch}.
	\end{proof}
	\begin{bem}\label{bem_gausslightcone}
		Following the computations of Roesch \cite[Section 2.2]{roesch}, we can further compute that
		\begin{align*}
			\chi_r&=rh(r)\d\Omega^2,\\
			\zeta_r&=0,
		\end{align*}
		for the leaves $\Sbb^2_r$ of the background foliaton particular,  and that
		\[
			\mathcal{H}^2=2\operatorname{R}-\frac{4}{\omega^2}\left(1-h(\omega)\right)
		\]
		for any spacelike cross section $\Sigma_\omega$, and the leaf $\Sbb^2_{r_i}$ is a marginally outer trapped surface in $\mathcal{N}$.
	\end{bem}

	Let us now return to the explicit case of the de\,Sitter spacetime, which is of class $\mathcal{S}$ with $h=1-r^2$. As $h(1)=0$ and $h'(1)\not=0$, $\{r=1\}$ is a non-degenerate Killing horizon and we can extend the spacetime past this horizon into the respective generalized Kruskal--Szekeres spacetime, such that the radial function $\rho$ takes any value in $(0,\infty)$. Hence, any past-pointing standard lightcone $\mathcal{N}=\{v=c\}$ ($c>0$) is foliated by a background foliation $(\Sbb^2_r)_{r\in(0,\infty)}$ of round spheres.
	
	Recall that
	\begin{align}\label{eq_riemdesitterlightcone}
	\overline{\Rm}(X,Y,Z,W)=g(X,Z)g(Y,W)-g(Y,Z)g(X,W).
	\end{align}
	Then, using Lemma \ref{lem_gausslightcone} we directly obtain the following, well-known fact:
	\begin{kor}\label{kor_gaussdesitter}
		Let $\Sigma$ be a spacelike cross section of a past-pointing lightcone in the generalized Kruskal--Szekeres extension of the de\,Sitter spacetime. Then
		\[
			\mathcal{H}^2=2\operatorname{R}-4.
		\]
		Thus, for any STCMC surface $\Sigma_\omega$, there exists a constant $b>0$, and vector $\vec{a}\in\R^3$ such that
		\[
			\omega(\vec{x})=\frac{b}{\sqrt{1+\btr{\vec{a}}^2}-\vec{a}\cdot\vec{x}}.
		\]
	\end{kor}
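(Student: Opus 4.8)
The plan is to treat the pointwise identity and the classification of STCMC surfaces separately: the former is the genuinely de\,Sitter-specific computation, while the latter follows from the classification of conformally round metrics of constant Gauss curvature.

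For the identity $\mathcal{H}^2 = 2\operatorname{R} - 4$, I would substitute the constant sectional curvature of de\,Sitter recorded in \eqref{eq_riemdesitterlightcone} into Lemma \ref{lem_gausslightcone}. Contracting \eqref{eq_riemdesitterlightcone} in an orthonormal frame on the $4$-dimensional ambient spacetime yields $\overline{\Ric} = 3\,g$ and $\overline{\operatorname{R}} = 12$, hence $\overline{\Ric}(\ul{L},L) = 3\,g(\ul{L},L) = 6$. Since $\ul{L}$ and $L$ are null with $g(\ul{L},L)=2$, equation \eqref{eq_riemdesitterlightcone} also gives $\overline{\Rm}(\ul{L},L,L,\ul{L}) = g(\ul{L},L)^2 - g(L,L)\,g(\ul{L},\ul{L}) = 4$. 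Plugging these three values into Lemma \ref{lem_gausslightcone} collapses the ambient contribution, $-2(12)+4(6)-4 = -4$, leaving $\mathcal{H}^2 = 2\operatorname{R} - 4$. As a consistency check, the same value drops out of Remark \ref{bem_gausslightcone} upon inserting $h(\omega) = 1-\omega^2$, since then $1-h(\omega) = \omega^2$.

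For the STCMC classification, the identity just proven shows that $\mathcal{H}^2$ is constant along $\Sigma_\omega$ exactly when its scalar curvature $\operatorname{R}$, equivalently its Gauss curvature $\operatorname{R}/2$, is constant; by Gauss--Bonnet the genus-$0$ constraint forces that curvature to be positive. I must therefore classify the conformal factors $\omega$ for which $\gamma_\omega = \omega^2\,\d\Omega^2$ has constant positive Gauss curvature. The clean reformulation I would use is in terms of $v := \omega^{-1}$: a short computation using $\Delta_{\Sbb^2}x_i = -2x_i$ shows that if $v$ is the restriction to $\Sbb^2$ of an affine function $p - \vec{a}\cdot\vec{x}$ on $\R^3$ with $p > \btr{\vec{a}}$ (so that $v>0$), then $\gamma_\omega$ has constant Gauss curvature $p^2 - \btr{\vec{a}}^2 > 0$. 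Reparametrising $(p,\vec{a})$ so as to normalise the constant term to $\sqrt{1+\btr{\vec a}^2}$ produces exactly the stated expression, with $b>0$ recording the curvature scale (here $p^2-\btr{\vec a}^2 = b^{-2}$) and $\vec{a}$ the off-centre boost parameter; this affine family visibly exhausts all positive curvature values.

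The routine parts are the frame contraction in the first step and the spherical-harmonic computation exhibiting the affine family. The main obstacle is the converse in the classification: showing that \emph{every} constant-curvature conformal factor is of this affine form, i.e. that the nonlinear equation $v^2 + v\,\Delta_{\Sbb^2}v - \btr{\nabla v}^2 = \mathrm{const}$ admits only affine solutions. I would settle this via the classical uniqueness of the round metric up to isometry: a constant-curvature metric conformal to $\d\Omega^2$ is the pullback of $\d\Omega^2$ under a conformal automorphism of the round sphere, hence under an element of the restricted Lorentz group $\mathrm{SO}^{+}(1,3)$ acting on $\Sbb^2$ as the celestial sphere. Tracking how a boost rescales the time component of the null direction $(1,\vec{x})\in\R^{1,3}$ then recovers precisely the conformal factor above, up to the sign bookkeeping governing the boost direction. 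This mirrors the treatment of the Minkowski lightcone in \cite{wolff1}, whose argument I would adapt for the remaining details.
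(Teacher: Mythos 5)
Your proof is correct and follows essentially the same route as the paper: the identity $\mathcal{H}^2=2\operatorname{R}-4$ is obtained exactly as here, by inserting the constant-curvature form \eqref{eq_riemdesitterlightcone} into Lemma \ref{lem_gausslightcone} (your contractions $\overline{\operatorname{R}}=12$, $\overline{\Ric}(\ul{L},L)=6$, $\overline{\Rm}(\ul{L},L,L,\ul{L})=4$ all check out), and the STCMC classification reduces to the standard description of conformally round metrics of constant curvature, which the paper simply cites (\cite{marssoria}, \cite{chenwang}) while you sketch the well-known affine/M\"obius argument for it.
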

	\begin{proof}
		The identity for $\mathcal{H}^2$ is immediate from \eqref{eq_riemdesitterlightcone} and Lemma \ref{lem_gausslightcone}. In particular, by our previous observations any spacelike cross section $\Sigma$ of $\mathcal{N}$ is an STCMC surface if and only if it is a conformally round surface of constant scalar curvature. As $\omega$ such that $\Sigma=\Sigma_\omega$ is precisely the conformal factor of $\gamma_\omega$, the identity for $\omega$ is a well known fact, see e.g. \cite[Proposition 6]{marssoria}, \cite[Theorem 4.1]{chenwang}.
	\end{proof}
	\begin{bem}\label{bem_gaussdesitter}\,
		\begin{enumerate}
			\item[(i)] Corollary \ref{kor_gaussdesitter} in particular reestablishes the well-known fact that although the value of $\mathcal{H}^2$ is shifted by a constant the STCMC surfaces of the Minkowski and the de\,Sitter lightcone are the same, in the sense that the conformal factor $\omega$ is given by the same identity. Recall that all these surfaces are up to scaling isometric to a standard round sphere via a M\"obius transformation, and that the M\"obius group is isomorphic to the restricted Lorentz group $SO^+(1,3)$. 
			
			On the other hand, the full de\,Sitter spacetime can be realized as the pseudosphere $\mathbb{H}^{1,3}_1(0)$ in the $1+4$-dimensional Minkowski spacetime $\R^{1,4}$. Thus, its full isometry group is $O(1,4)$. However, by our choice of coordinates the subgroup of isometries that leaves the lightcone $\mathcal{N}$ invariant can be identified with $SO^+(1,3)$. So the isometries that leave the lightcone invariant (but transform the individual cross sections) are in $SO^+(1,3)$ both for the de\,Sitter and the Minkowski spacetime. See Section \ref{sec_discussion} for more details.
			\item[(ii)] We recall that $b$ is the area radius of $\Sigma$, i.e., $\btr{\Sigma}=4\pi b^2$, and moreover $\operatorname{R}=\frac{2}{b^2}$. Thus, $\mathcal{H}^2>0$ for $b<1$, $\mathcal{H}^2=0$ for $b=1$, and $\mathcal{H}^2<0$ for $b>1$. Hence, there is a plethora of MOTS in the de\,Sitter lightcone apart from $\Sbb^2_1$, and in fact for any point $p$ on $\mathcal{N}$, we can find a MOTS on $\mathcal{N}$ passing through $p$.
		\end{enumerate}
	\end{bem}
	\newpage
\section{Null mean curvature flow on the de\,Sitter lightcone}\label{sec_nullMCF}
	As first studied by Roesch--Scheuer in \cite{roeschscheuer}, we say a family of spacelike cross sections $\Sigma_t$ is evolving under what we will call \emph{null mean curvature flow} here along a null hypersurface in an ambient spacetime $(M,g)$, if
	\[
		\frac{\d }{\d t}x=\frac{1}{2}g(\vec{\mathcal{H}},L)\ul{L}=-\frac{1}{2}\theta\ul{L}.
	\]
	In this section, we consider the case of a lightcone in the generalized Kruskal--Szekeres extension of the de\,Sitter spacetime as introduced in Section \ref{sec_lightcone}. As $\ul{L}(\rho)=1$ for our choice of null generator, we note that the evolution under null mean curvature flow is equivalent to the scalar parabolic equation
	\begin{align}\label{eq_NMCF}
		\frac{\d }{\d t}\omega=-\frac{1}{2}\theta.
	\end{align}
	In particular, null mean curvature flow along the de\,Sitter lightcone is equivalent to a parabolic evolution equation for the conformal factor $\omega$, exactly as in the case of the standard Minkowski lightcone, cf. \cite{wolff1}. Hence, we can equivalently study the evolution of the (conformally round) metric of the spacelike cross sections. By \cite[Lemma 3.3]{roeschscheuer} and the fact that $\ul{\chi}\equiv 0$, we find 
	\begin{align}\label{eq_metric1}
		\frac{\d }{\d t}\gamma=\ul\theta\left(-\frac{1}{2}\theta\right)\gamma=-\frac{1}{2}\mathcal{H}^2\gamma,
	\end{align}
	which in this case can also be verified by direct computation from \eqref{eq_NMCF} as $\ul{\theta}=\frac{2}{\omega}$. In the Minkowski lightcone, this yields an equivalence between null mean curvature flow and $2d$-Ricci flow of conformally round surfaces, cf. \cite{wolff1}. 
	
	The observation that \eqref{eq_NMCF} and \eqref{eq_metric1} are equivalent remains true for lightcones in general spacetimes of class $\mathcal{S}$ (and their generalized Kruskal--Szekeres extensions) even in higher dimensions. The case, when a MOTS arises as an intersection with a black hole horizon, such as in the Schwarzschild lightcone, is precisely covered by the general analysis of Roesch and Scheuer \cite{roeschscheuer}. In the case of the de\,Sitter lightcone, we have an example of a MOTS that instead arises as an intersection with a cosmological Killing horizon.
	
	By Corollary \ref{kor_gaussdesitter} and \eqref{eq_metric1}, we have that the family of metrics $\gamma(t)$ evolves under null mean curvature flow as
	\begin{align}\label{eq_metric2}
		\frac{\d }{\d t}\gamma=-\left(\operatorname{R}-2\right)\gamma.
	\end{align}
	This gives the following lemma:
	\begin{lem}\label{lem_area}
		Let $(\Sigma_t)$ be a family of spacelike cross sections smoothly evolving under null mean curvature flow starting at $\Sigma_0$. Then
		\[
			\btr{\Sigma_t}=4\pi+e^{2t}\left(\btr{\Sigma_0}-4\pi\right).
		\]
	\end{lem}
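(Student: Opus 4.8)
The plan is to differentiate the area functional $|\Sigma_t|=\int_{\Sigma_t}\d\mu_{\gamma(t)}$ in time, using the conformal evolution equation \eqref{eq_metric2} for the induced metric, then to reduce the resulting integral to a topological constant via Gauss--Bonnet, and finally to solve an elementary linear ODE.

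First I would record how the area measure evolves. Since the cross sections are two-dimensional and \eqref{eq_metric2} is purely conformal, the standard formula $\frac{\d}{\d t}\d\mu=\frac{1}{2}\operatorname{tr}_\gamma\!\big(\frac{\d}{\d t}\gamma\big)\,\d\mu$ together with $\operatorname{tr}_\gamma\gamma=2$ gives
\[
\frac{\d}{\d t}\d\mu=-\left(\operatorname{R}-2\right)\d\mu .
\]
Integrating over $\Sigma_t$ then yields
\[
\frac{\d}{\d t}|\Sigma_t|=-\int_{\Sigma_t}\operatorname{R}\,\d\mu+2|\Sigma_t| .
\]

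The key step is to observe that the remaining integral is a topological invariant. Each cross section $\Sigma_t$ is a conformally round, hence genus-$0$, closed orientable surface (Section \ref{sec_lightcone}), so by the Gauss--Bonnet theorem together with the two-dimensional identity $\operatorname{R}=2K$,
\[
\int_{\Sigma_t}\operatorname{R}\,\d\mu=2\int_{\Sigma_t}K\,\d\mu=4\pi\chi(\Sigma_t)=8\pi .
\]
Substituting this gives the linear ODE $\frac{\d}{\d t}|\Sigma_t|=2|\Sigma_t|-8\pi$.

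Solving this ODE is immediate: the constant particular solution is $|\Sigma|\equiv 4\pi$, and the homogeneous solutions are multiples of $e^{2t}$, so $|\Sigma_t|=4\pi+Ce^{2t}$; evaluating at $t=0$ fixes $C=|\Sigma_0|-4\pi$, which is the claimed formula. There is essentially no analytic obstacle here---the only point requiring care is confirming that Gauss--Bonnet applies with Euler characteristic $\chi=2$, which is guaranteed by the fact, established in Section \ref{sec_lightcone}, that every spacelike cross section of the lightcone is diffeomorphic to $\Sbb^2$.
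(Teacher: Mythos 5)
Your proof is correct and follows essentially the same route as the paper: differentiate the volume element using \eqref{eq_metric2} to get $\frac{\d}{\d t}\d\mu=-(\operatorname{R}-2)\d\mu$, apply Gauss--Bonnet to evaluate $\int_{\Sigma_t}\operatorname{R}=8\pi$, and solve the resulting linear ODE. The only difference is that you spell out the genus-$0$ justification for Gauss--Bonnet explicitly, which the paper leaves implicit.
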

	\begin{proof}
		From \eqref{eq_metric2}, we can directly compute that the volume element $\d\mu$ satisfies
		\[
			\frac{\d}{\d t}\d\mu=-(\operatorname{R}-2)\d\mu.
		\]
		Hence,
		\[
			\frac{\d}{\d t}\btr{\Sigma_t}=2\btr{\Sigma_t}-\int_{\Sigma_t}\operatorname{R}=2\btr{\Sigma_t}-8\pi,
		\]
		where we used Gauss--Bonnet. The claim then follows by solving this first order ODE problem.
	\end{proof}
		If we assume $\mathcal{H}^2_t$ is a constant only depending on $t$, one can reduce the parabolic system to an ODE on the area radius. More precisely, for $\vec{a}\in\R^3$, $b_0>0$, we note that
		\[
			\omega_t(\vec{x})=\frac{b(t)}{\sqrt{1+\btr{\vec{a}}^2}-\vec{a}\cdot\vec{x}}
		\]
		is a solution to null mean curvature flow, if and only if
		\[
			b(t)=\sqrt{1+e^{2t}( b_0^2-1)}.
		\]
		It is easy to see that these families of (round) spheres are shrinking if $b_0<1$, stationary if $b_0=1$, and expanding if $b_0>1$. Let $\omega_{b_0,\vec{a}}$ denote the conformal factor of an STCMC surface with respect to $b_0>0$, $\vec{a}\in\R^3$ as given by the formula in Corollary \ref{kor_gaussdesitter}. By the avoidance principle we find for any solution to null mean curvature flow that if $\omega_0\le \omega_{b_0,\vec{a}}$ for some $b_0<1$ the solution is shrunk towards the cone as long as it remains smooth, and if $\omega_0\ge \omega_{b_0,\vec{a}}$ for some $b_0>1$ the solution expands towards infinity as long as it remains smooth.
\subsection{Singularity formation and asymptotic behavior}\label{subsec_singularities}
	We now want to gain a precise understanding of the formation of singularities and the asymptotic behaviour of solution to null mean curvature flow. The key ingredients to our analysis will be the precise formula for the area growth, Lemma \ref{lem_gausslightcone}, and a rescaling to volume preserving $2d$-Ricci flow.
	
	To this end, for any smooth solution $(\Sigma_t)$ of null mean curvature flow, we define the scaling factor
	\[
		c(t):=\exp\left(\left(\int_0^t\fint_{\Sigma_s}\operatorname{R}\d s\right)-2t\right)>0,
	\]
	and rescaled time $\widetilde{t}(t):=\int_0^tc(s)\d s$, where $\fint_\Sigma \operatorname{R}=\frac{\int_\Sigma \operatorname{R}}{\btr{\Sigma}}$. Note that $\widetilde{t}$ is invertible with smooth inverse as $c$ is a strictly positive, smooth function. We then define the family of metrics $(\widetilde{\gamma}(\,\widetilde{t}\,))$ as
	\[
		\widetilde{\gamma}(\,\widetilde{t}\,)=c(t)\gamma(t).
	\]
	Then,
	\begin{align*}
		\frac{\d}{\d\widetilde{t}}\widetilde{\gamma}(\,\widetilde{t}\,)&=\frac{1}{c(t)}\frac{d}{\d t}\left(c(t)\gamma(t)\right)=\left(\fint \operatorname{R}-2\right)\gamma(t)-\left(\operatorname{R}-2\right)\gamma(t)=-\left(\widetilde{\operatorname{R}}-\fint\widetilde{\operatorname{R}}\right)\widetilde{\gamma}(\,\widetilde{t}\,),
	\end{align*}
	where we used the scaling properties of the scalar curvature in the last line. Hence, the family of metrics $(\widetilde{\gamma}(\,\widetilde{t}\,))$ evolves under volume preserving Ricci flow. By a classical result first proven by Hamilton \cite{hamilton1}, the solution of volume preserving Ricci flow starting from any metric on a compact Riemannian manifold exists for all times and converges to a metric of constant (scalar) curvature as $\widetilde{t}\to\infty$. In the conformally round case, this was initially proven by Hamilton under the assumption of strictly positive scalar curvature, which was removed by Chow \cite{chow1}. The conformally round case was later revisited and independent proofs were also given by Bartz--Struwe--Ye \cite{bartzstruweye}, Struwe \cite{struwe}, Andrews-Bryan \cite{andrewsbryan}, and recently by the author \cite{wolff1} under Hamiltons initial assumption of positive scalar curvature.
	Note that using Lemma \ref{lem_gausslightcone} we can explicitly compute that
	\begin{align}\label{eq_areagrowth}
		c(t)=\frac{\btr{\Sigma_0}}{\btr{\Sigma_t}}=\frac{\btr{\Sigma_0}}{4\pi+e^{2t}(\btr{\Sigma_0}-4\pi)},
	\end{align}
	which is consistent with the fact that the rescaled flow is preserving the area by construction. This precise formula for the scaling factor now allows for a complete analysis of the singularity formation and asymptotic behaviour of null mean curvature flow on the de\,Sitter lightcone.
	
	First, we prove a preliminary result.
	\begin{prop}\label{prop_nosing}
		Let $\Sigma_0$ be a spacelike cross section of the de\,Sitter lightcone $\mathcal{N}$. Then, the solution of null mean curvature flow along $\mathcal{N}$ starting at $\Sigma_0$ develops no singularities in finite time $T_{max}<\infty$ unless $\btr{\Sigma_t}\to 0$ as $t\to T_{max}$.
	\end{prop}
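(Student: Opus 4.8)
The plan is to argue by contradiction, transferring regularity from the rescaled flow back to the original flow. The key structural input is that the rescaled metrics $\widetilde{\gamma}$ solve a volume preserving $2d$-Ricci flow, which by Hamilton's theorem \cite{hamilton1} (with the positivity hypothesis removed by Chow \cite{chow1}) exists and remains smooth and non-degenerate for all rescaled times $\widetilde{t}\in[0,\infty)$. Thus the rescaled flow never becomes singular, and any finite-time singularity of the original flow must originate from a degeneration of the scaling factor $c(t)$ or of the time change $\widetilde{t}(t)$. By the explicit formula \eqref{eq_areagrowth}, both are governed entirely by the area $\btr{\Sigma_t}$, which is what makes the argument work.

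Concretely, I would suppose that $T_{max}<\infty$ is the maximal existence time and that, contrary to the claim, $\btr{\Sigma_t}\not\to0$ as $t\to T_{max}$. By Lemma \ref{lem_area} the area is the explicit smooth function $\btr{\Sigma_t}=4\pi+e^{2t}(\btr{\Sigma_0}-4\pi)$, so it converges as $t\to T_{max}$ to $L:=4\pi+e^{2T_{max}}(\btr{\Sigma_0}-4\pi)$; since the area stays positive along the flow and $L\neq0$ by assumption, in fact $L>0$. Hence, by \eqref{eq_areagrowth}, $c(t)=\btr{\Sigma_0}/\btr{\Sigma_t}$ is continuous and monotone on $[0,T_{max}]$ with finite positive limit $\btr{\Sigma_0}/L$, so $c$ is bounded above and bounded away from zero there.

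Since $c$ is bounded on the finite interval $[0,T_{max}]$, the rescaled time $\widetilde{t}(T_{max})=\int_0^{T_{max}}c(s)\d s$ is finite. Therefore $t\to T_{max}$ corresponds to an interior rescaled time at which $\widetilde{\gamma}$ is smooth and non-degenerate by Hamilton--Chow. Undoing the rescaling via $\gamma(t)=c(t)^{-1}\widetilde{\gamma}(\widetilde{t}(t))$---a smooth reparametrization by the smooth, bounded, bounded-below factor $c$, whose time change $\widetilde{t}(\cdot)$ has smooth inverse because $c>0$---the metrics $\gamma(t)$, equivalently the conformal factors $\omega(t)$, extend smoothly with a non-degenerate limit as $t\to T_{max}$. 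By short-time existence the flow then continues past $T_{max}$, contradicting maximality. This forces $\btr{\Sigma_t}\to0$, as claimed.

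The step I expect to be most delicate is the final descent: I must ensure that the all-time smoothness of the rescaled flow really yields $C^\infty$ control of $\widetilde{\gamma}$ up to the closed rescaled time interval, and that this passes through the rescaling to genuine smooth convergence of $\omega(t)$, rather than mere convergence of the area or of the metric in a weak sense. This is exactly where the explicit formulas of Lemma \ref{lem_area} and \eqref{eq_areagrowth} pay off: they guarantee that $c$ and $\widetilde{t}(\cdot)$ are smooth with the required bounds and monotonicity, so no pathological behavior of the scaling factor near $T_{max}$ has to be excluded by hand.
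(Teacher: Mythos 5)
Your proposal is correct and follows essentially the same route as the paper: assume a finite-time singularity with non-vanishing area, use the explicit formulas for $\btr{\Sigma_t}$ and $c(t)$ to conclude that $c$ and $\widetilde{t}$ stay bounded, and then derive a contradiction with the global existence and smoothness of volume-preserving $2d$-Ricci flow due to Hamilton and Chow. You merely run the transfer of regularity in the contrapositive direction (smoothness of the rescaled flow implies extendability of the original flow past $T_{max}$) and spell out the short-time-existence step, whereas the paper phrases it as the original singularity forcing a finite-time singularity of the rescaled flow; these are the same argument.
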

	\begin{proof}
		Assume that the smooth solution $(\Sigma_t)$ starting at $\Sigma_0$ develops a singularity in finite time such that $\btr{\Sigma_t}\not\to0$ as $t\to T_{max}<\infty$. In particular, $c(t)$ and hence $\widetilde{t}(t)$ remain bounded as $t\to T_{max}$ which implies that volume preserving $2d$-Ricci flow starting at $\Sigma_0$ develops a finite time singularity. This gives an immediate contradiction, as the solution exists for all times without forming a singularity, cf. \cite[Corollary 1.3]{chow1}.
	\end{proof}
	In particular, a solution of null mean curvature flow exists for all positive times, unless $\btr{\Sigma_0}<4\pi$, in which case the solutions forms a singularity in finite time with $T_{max}$ fully determined by the explicit formula for $\btr{\Sigma_t}$, Lemma \ref{lem_gausslightcone}. We now state our main theorem.
	\begin{thm}\label{thm_main1}
		Let $\Sigma_0$ be a spacelike cross section of the lightcone $\mathcal{N}$ in the de\,Sitter spacetime, and let $(\Sigma_t)$ be the solution of null mean curvature flow along $\mathcal{N}$ starting at $\Sigma_0$. Then,
		\begin{enumerate}
			\item[\emph{(i)}] if $\btr{\Sigma_0}<4\pi$ the solution shrinks into the tip of the cone in finite time. Moreover, the rescaled solution converges to a surface of constant spacetime mean curvature with $\mathcal{H}^2>0$ as $\widetilde{t}\to\infty$.
			\item[\emph{(ii)}] if $\btr{\Sigma_0}=4\pi$ the flow exists for all times and converges to a MOTS as $t\to\infty$.
			\item[\emph{(iii)}] if $\btr{\Sigma_0}>4\pi$ the flow exists for all times and expands towards infinity. The rescaled flow converges to a conformally round surface, but unless $(\Sigma_t)$ is a family of expanding spheres, the limit is not a surface of constant spacetime mean curvature.
		\end{enumerate}
	\end{thm}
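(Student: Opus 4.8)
The plan is to treat the three regimes separately, using the explicit area formula of Lemma~\ref{lem_area} together with the identity \eqref{eq_areagrowth} for the scaling factor $c(t)=\btr{\Sigma_0}/\btr{\Sigma_t}$, and to feed this into the convergence theory for volume preserving $2d$-Ricci flow (Hamilton~\cite{hamilton1}, Chow~\cite{chow1}). First I would read off the area behaviour from $\btr{\Sigma_t}=4\pi+e^{2t}(\btr{\Sigma_0}-4\pi)$: the area is strictly decreasing to $0$ at the finite time $T_{max}=\tfrac12\ln\frac{4\pi}{4\pi-\btr{\Sigma_0}}$ if $\btr{\Sigma_0}<4\pi$, is constantly $4\pi$ if $\btr{\Sigma_0}=4\pi$, and is strictly increasing to $+\infty$ if $\btr{\Sigma_0}>4\pi$. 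Combined with Proposition~\ref{prop_nosing}, this already settles the existence statements: in case (i) the only admissible singularity is $\btr{\Sigma_t}\to0$, so $T_{max}$ is precisely the time at which the area formula vanishes and the surface shrinks to the tip; in cases (ii) and (iii) the area stays bounded away from $0$, so the flow exists for all $t\ge0$.

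The crux is then to decide, in each regime, whether the rescaled time $\widetilde t(t)=\int_0^tc(s)\,\d s$ exhausts $[0,\infty)$ or only a bounded interval, since the Ricci flow limit is reached only as $\widetilde t\to\infty$. Using $c(t)=\btr{\Sigma_0}/\btr{\Sigma_t}$ I would estimate: in case (i), near $T_{max}$ one has $\btr{\Sigma_t}\sim 8\pi(T_{max}-t)$, so $c(t)\sim \mathrm{const}/(T_{max}-t)$ and the integral $\widetilde t(t)$ diverges logarithmically, giving $\widetilde t\to\infty$; in case (ii), $c\equiv1$, so $\widetilde t=t\to\infty$ and the rescaled flow coincides with the flow itself; in case (iii), $c(t)$ decays like $e^{-2t}$, so $\widetilde t(t)\to \widetilde t_\infty<\infty$ and the rescaled flow only runs for the finite rescaled time $\widetilde t_\infty$.

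With this in hand the conclusions follow from the behaviour of volume preserving $2d$-Ricci flow, after noting that the conformal rescaling $\widetilde\gamma=c\gamma$ corresponds to passing to the honest cross section $\Sigma_{\sqrt{c}\,\omega}$, to which Corollary~\ref{kor_gaussdesitter} (hence $\mathcal{H}^2=2\operatorname{R}-4$) still applies, and that the rescaled area is preserved and equals $\btr{\Sigma_0}$. In cases (i) and (ii), since $\widetilde t\to\infty$, the rescaled metric converges smoothly to the round metric of area $\btr{\Sigma_0}$, whose scalar curvature is $\widetilde{\operatorname{R}}_\infty=8\pi/\btr{\Sigma_0}$; for $\btr{\Sigma_0}<4\pi$ this gives $\widetilde{\operatorname{R}}_\infty>2$ and a limiting STCMC surface with $\mathcal{H}^2>0$, while for $\btr{\Sigma_0}=4\pi$ it gives $\widetilde{\operatorname{R}}_\infty=2$, i.e. $\mathcal{H}^2=0$, a MOTS, and because here $\widetilde t=t$ this limit is attained by the unrescaled flow as $t\to\infty$. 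In case (iii), since $\widetilde t_\infty<\infty$, the rescaled flow converges simply to the smooth metric $\widetilde\gamma(\widetilde t_\infty)$, which lies in the conformal class of the round sphere; this limit has constant scalar curvature, equivalently is STCMC, if and only if $\widetilde\gamma(\widetilde t_\infty)$ is a stationary point of the flow, and by backward uniqueness this forces $\gamma(0)$ itself to be of constant curvature, i.e. $\Sigma_0$ (and hence every $\Sigma_t$) to be a round/M\"obius sphere, precisely the expanding-sphere solutions $\omega_t=b(t)/(\sqrt{1+\btr{\vec a}^2}-\vec a\cdot\vec x)$ with $b_0>1$.

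The main obstacle I expect is the rescaled-time dichotomy of the second paragraph: everything hinges on the quantitative claim that $\widetilde t$ diverges in cases (i)--(ii) but stays finite in case (iii), which is exactly where the explicit area formula is indispensable and which cleanly separates ``converges to a round limit'' from ``freezes at a generically non-round limit''. A secondary point requiring care is the correct geometric reading of the rescaling --- that $\widetilde\gamma=c\gamma$ is realized by a cross section of the same cone so that $\mathcal{H}^2=2\operatorname{R}-4$ may be applied to the limit --- and, in case (iii), the backward-uniqueness argument certifying that a non-round initial surface cannot produce a constant-curvature rescaled limit in finite rescaled time.
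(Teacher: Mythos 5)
Your proposal is correct and follows essentially the same route as the paper: explicit area formula, Proposition~\ref{prop_nosing}, the rescaled-time dichotomy ($\widetilde{t}\to\infty$ in cases (i)--(ii), $\widetilde{t}\to\widetilde{t}_\infty<\infty$ in case (iii)), and Chow's convergence theorem, with your Gauss--Bonnet computation of the limiting curvature $8\pi/\btr{\Sigma_0}$ making the sign of $\mathcal{H}^2$ in case (i) slightly more explicit than the paper does. The only divergence is the rigidity step in case (iii): the paper applies the strong maximum principle to the evolution of $\mathcal{H}^2-\fint\mathcal{H}^2$ under the rescaled equation, whereas you invoke backward uniqueness of the volume-preserving Ricci flow to force $\gamma(0)$ to have constant curvature; both arguments work, though the maximum-principle route is more elementary and avoids citing a backward-uniqueness theorem.
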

	
	\begin{proof}
		\begin{enumerate}
			\item[(i)] Let $\btr{\Sigma}<4\pi$. As stated in Lemma \ref{lem_gausslightcone}, we have
			\[
				\btr{\Sigma_t}=4\pi+e^{2t}\left(\btr{\Sigma_0}-4\pi\right).
			\]
			Thus $\btr{\Sigma_t}\to0$ as $t\to T_{max}:= \frac{1}{2}\ln\left(\frac{4\pi}{4\pi-\btr{\Sigma_0}}\right)$. By Proposition \ref{prop_nosing} we know that no singularity can arise for times $t<T_{max}$. Recall that the time scale $\widetilde{t}(t)$ for the flow rescaled to be volume preserving is given by 
			\[
				\widetilde{t}(t)=\int\limits_0^tc(s)\d s,
			\]
			where $c(t)$ is given by \eqref{eq_areagrowth}. Integration yields that
			\[
				\widetilde{t}(t)=\frac{\btr{\Sigma_0}}{4\pi}\left(t+\frac{1}{2}\ln\left(\frac{\btr{\Sigma_0}}{4\pi+e^{2t}\left(\btr{\Sigma_0}-4\pi\right)}\right)\right).
			\]
			It is straightforward to check that $\widetilde{t}(t)\to\infty$ as $t\to T_{max}$. In particular, the rescaled flow exists for all times and thus converges to a surface of constant scalar curvature, cf. \cite{chow1}. By Corollary \ref{kor_gaussdesitter}, the rescaled flow converges to an STCMC surface.
			\item[(ii)] Let $\btr{\Sigma_0}=4\pi$. By Lemma \ref{lem_gausslightcone}, we have
			\[
				\btr{\Sigma_t}=4\pi,
			\]
			so the area is preserved, and it is indeed straightforward to observe that the flow is equivalent to volume preserving Ricci flow starting from $\Sigma_0$. Thus, the solution exists for all times and converges to a surface of constant scalar curvature, cf. \cite{chow1}. As the area is preserved and $\btr{\Sigma_0}=4\pi$, we know that the limiting scalar curvature satisfies $\operatorname{R}_\infty=2$. Hence, the limiting surface is a MOTS by Corollary \ref{kor_gaussdesitter}.
			\item[(iii)] Let $\btr{\Sigma_0}>4\pi$. In particular,
			\[
				\btr{\Sigma_t}=4\pi+e^{2t}\left(\btr{\Sigma_0}-4\pi\right)\ge 4\pi
			\]
			by Lemma \ref{lem_gausslightcone}. Thus, the flow exits for all positive times by Proposition \ref{prop_nosing} and expands towards infinity, i.e., $\btr{\Sigma_t}\to\infty$ as $t\to\infty$.
			
			Now, as in case (i), we note that the rescaled time $\widetilde{t}$ satisfies
			\[
				\widetilde{t}(t)=\frac{\btr{\Sigma_0}}{4\pi}\left(t+\frac{1}{2}\ln\left(\frac{\btr{\Sigma_0}}{4\pi+e^{2t}\left(\btr{\Sigma_0}-4\pi\right)}\right)\right).
			\]
			In particular, as $\btr{\Sigma_0}>4\pi$, we have
			\[
				e^{2\widetilde{t}(t)}=e^{\frac{\btr{\Sigma_0}}{4\pi}}\frac{e^{2t}}{4\pi+e^{2t}(\btr{\Sigma_0}-4\pi)}\to e^{\frac{\btr{\Sigma_0}}{4\pi}}\frac{1}{\btr{\Sigma_0}-4\pi}<\infty
			\]
			as $t\to\infty$. Hence $\widetilde{t}\to\widetilde{t}(\btr{\Sigma_0})$ as $t\to\infty$ for some constant $\widetilde{t}(\btr{\Sigma_0})>0$ only depending on $\btr{\Sigma_0}$. Thus, the rescaled flow converges to a smooth conformally round surface, namely $\widetilde{\Sigma}_{\widetilde{t}({\btr{\Sigma_0}})}$, but is not round unless the rescaled flow converges to a round limit in finite time. By the strong maximum principle applied to the evolution of $\left(\mathcal{H}^2-\fint\mathcal{H}^2\right)$  under the rescaled equation, this only occurs when the family $\widetilde{\Sigma}_t$ is a family of stationary round spheres, i.e., a family of expanding spheres under the unrescaled flow.
		\end{enumerate}
	\end{proof}
\subsection{Ancient solutions}\label{subsec_ancientSol}
	We are further able to completely characterize the ancient solutions to null mean curvature flow along the de\,Sitter lightcone. As before, this will directly follow from a suitable rescaling. Similar to the above, one can check that if $(\Sigma_t)$ is a solution of null mean curvature flow, then the rescaled metrics
	\[
		\widehat{\gamma}(\,\widehat{t}\,)=C(t)\gamma(t),
	\]
	where
	\begin{align}
		C(t)&:=e^{-2t},\label{eq_scaling_ricci}\\
		\widehat{t}(t)&:=\int\limits_0^tC(s)\d s=\frac{1}{2}\left(1-e^{-2t}\right),\label{eq_time_ricci}
	\end{align}
	are evolving under $2d$-Ricci flow (in the conformal class of the round sphere). Now, if $(\Sigma_t)$ is an ancient solution, i.e., the solution exists for all negative times, then the family $\left(\widehat{\gamma}(\,\widehat{t}\,)\right)$ is an ancient solution to $2d$-Ricci flow, as $\widehat{t}(t)\to-\infty$ for $t\to-\infty$ by \eqref{eq_time_ricci}.
	
	By a maximum principle argument, one can see that all compact ancient solutions to $2d$-Ricci flow are either flat, in which case they are eternal solutions, or the scalar curvature is strictly positive for all times, and hence the metrics are conformally round. In the latter case, Daskalopoulos--Hamilton--Sesum \cite{daskahamilsesum} characterized all ancient solutions with strictly positive scalar curvature as either a family of shrinking spheres or the King--Rosenau solution. Up to a M\"obius transformation, one can identify them with the following families of conformal factors
	\begin{align}
		\widehat{\omega}_{Sph}(\,\widehat{t}\,,\theta,\varphi)&=\sqrt{-2\widehat{t}\,\,}\label{eq_shrinkingspheres},\\
		\widehat{\omega}_{K-R}(\,\widehat{t}\,,\theta,\varphi)&=\sqrt{\frac{2\sinh(-\widehat{t}\,)}{1+\frac{\sin^2\theta}{2}\left(\cosh(-\widehat{t}\,)-1\right)}}\,\label{eq_kingrosenau},
	\end{align}
	defined on $(-\infty,0)$. Note that in both cases, we have that the area $\btr{\widehat{\Sigma}_t}$ of the evolving surfaces satisfies $\btr{\widehat{\Sigma}_t}\to-\infty$ as $\widehat{t}\to\infty$, and $\btr{\widehat{\Sigma}_t}\to0$ as $\widehat{t}\to0$. Of course, for any ancient solution, a translation in time $\widehat{t}\mapsto \widehat{t}+a$ ($a\in\R$) remains an ancient solution, as this merely corresponds to a relabelling of the family of metrics. Thus, depending on $a$, we can choose a family of shrinking spheres or a King--Rosenau solution such that at $t=0$ we have that $\btr{\widehat{\Sigma}_0}<4\pi$, $\btr{\widehat{\Sigma}_0}=4\pi$, or $\btr{\widehat{\Sigma}_0}>4\pi$. We obtain the following characterization using Theorem \ref{thm_main1}.
	\begin{thm}\label{thm_main2}
		Up to Lorentz transformations, there are six ancient solutions of null mean curvature flow in the de\,Sitter lightcone $\mathcal{N}$, three of which are rescalings of a familiy of shrinking spheres and three of which are rescalings of a King--Rosenau solution. Four of these solutions are eternal, one of which does not become round as $t\to\infty$.
		
		Up to the aforementioned Lorentz transformation, all three families of shrinking spheres converge to the MOTS $\Sigma_1$ as $t\to-\infty$, and the three types of King--Rosenau solutions have the same asymptotic behavior as $t\to-\infty$ with area converging to $4\pi$.
	\end{thm}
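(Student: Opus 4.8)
The plan is to push the Daskalopoulos--Hamilton--Sesum classification \cite{daskahamilsesum} of compact ancient $2d$-Ricci flows through the fixed rescaling \eqref{eq_scaling_ricci}--\eqref{eq_time_ricci}, and then to read off the qualitative behaviour of each resulting null mean curvature flow from the explicit area formula of Lemma \ref{lem_area} together with Theorem \ref{thm_main1}. First I would note that, since $\widehat t(t)\to-\infty$ as $t\to-\infty$ by \eqref{eq_time_ricci}, any ancient solution $(\Sigma_t)$ of null mean curvature flow rescales via $\widehat\gamma(\widehat t\,)=C(t)\gamma(t)$ to an ancient solution of $2d$-Ricci flow on $\Sbb^2$. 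A flat metric is excluded on $\Sbb^2$ by Gauss--Bonnet, so the maximum principle forces strictly positive scalar curvature, and \cite{daskahamilsesum} identifies the rescaled flow, up to a M\"obius transformation, with either the shrinking spheres \eqref{eq_shrinkingspheres} or a King--Rosenau solution \eqref{eq_kingrosenau}, each carrying the time-translation freedom $\widehat t\mapsto\widehat t+a$. By Remark \ref{bem_gaussdesitter}(i), the M\"obius freedom is precisely the freedom to apply a Lorentz transformation, so this step already produces the two shape classes of the statement.

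The core of the argument is to show that the continuous parameter $a$ is pinned down to three discrete cases per shape class by the area. Here the key invariant is supplied by Lemma \ref{lem_area}: since $\btr{\Sigma_t}-4\pi=e^{2t}(\btr{\Sigma_0}-4\pi)$, the sign of $\btr{\Sigma_t}-4\pi$ is constant along the flow, and it is exactly the invariant that survives the autonomous time-translation $t\mapsto t+t_0$ of the flow, which only rescales $\btr{\Sigma_0}-4\pi$ by the positive factor $e^{2t_0}$. Inverting the rescaling through $e^{-2t}=1-2\widehat t$ then shows that the resulting null mean curvature flow is eternal exactly when $\btr{\Sigma_0}\ge4\pi$ and develops a finite-time singularity exactly when $\btr{\Sigma_0}<4\pi$. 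Combining the two shape classes with the trichotomy $\btr{\Sigma_0}<4\pi$, $=4\pi$, $>4\pi$ yields the six ancient solutions. I expect the bookkeeping of the two competing time-symmetries, namely the Ricci-flow time-translation $a$ and the flow's own time-translation $t_0$, to be the main obstacle, as one must verify that passing to the quotient by the latter collapses the former to the sign of $\btr{\Sigma_0}-4\pi$.

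With the six solutions in hand, their forward behaviour is read off from Theorem \ref{thm_main1}. The four solutions with $\btr{\Sigma_0}\ge4\pi$ never reach zero area, hence are eternal, while the two with $\btr{\Sigma_0}<4\pi$ shrink into the tip of the cone in finite time by Theorem \ref{thm_main1}(i); among the eternal ones, the stationary and the expanding spheres and the area-preserving King--Rosenau flow all become round, either as a MOTS or via Theorem \ref{thm_main1}(ii), whereas the expanding King--Rosenau solution with $\btr{\Sigma_0}>4\pi$ is, by Theorem \ref{thm_main1}(iii) and the strong maximum principle, the unique eternal solution whose rescaled limit is conformally round but not a surface of constant spacetime mean curvature. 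This accounts for the four eternal solutions, one of which does not become round.

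Finally I would treat the backward asymptotics as $t\to-\infty$. Lemma \ref{lem_area} immediately gives $\btr{\Sigma_t}\to4\pi$ for every ancient solution, which is the common area limit in the statement. For the sphere class the conformal factor satisfies $\omega^2(t)\to1$ as $t\to-\infty$, independently of the choice within the class, so all three sphere solutions converge to the round sphere of area $4\pi$, namely the MOTS $\Sigma_1$. For the King--Rosenau class, the time-translation $a$ enters only subdominantly once $\widehat t\to-\infty$, so the three King--Rosenau solutions share a single limiting profile, degenerating in the same way while their area tends to $4\pi$; making this uniform control of the backward King--Rosenau asymptotics precise is the remaining delicate point.
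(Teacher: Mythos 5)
Your proposal is correct and follows essentially the same route as the paper: rescale via \eqref{eq_scaling_ricci}--\eqref{eq_time_ricci} to an ancient conformally round $2d$-Ricci flow, invoke the Daskalopoulos--Hamilton--Sesum classification with the M\"obius/Lorentz identification of Remark \ref{bem_gaussdesitter}(i), split each shape class into three by the sign of $\btr{\Sigma_0}-4\pi$ via Theorem \ref{thm_main1} and Lemma \ref{lem_area}, and read off the backward asymptotics from the explicit formulas \eqref{eq_shrinkingspheres}--\eqref{eq_kingrosenau}. The two points you flag as delicate (collapsing the time-translation parameter to the area trichotomy, and the $t_0$-independence of the $t\to-\infty$ limits) are handled in the paper by the same direct computations with the explicit conformal factors that you indicate.
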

	\begin{bem}\label{bem_main2}
		From Lemma \ref{lem_area} is is immediate that any ancient solutions satisfies $\btr{\Sigma_t}\to4\pi$ as $t\to-\infty$. Moreover, it is direct to see that the different cases in Theorem \ref{thm_main1} are naturally preserved under the flow, i.e., $\btr{\Sigma_t}<4\pi$ ($=4\pi$, $>4\pi$) if and only if $\btr{\Sigma_0}<4\pi$ ($=4\pi$, $>4\pi$). So even though two families of spheres or King--Rosenau solutions are related directly by a relabelling of time $\widehat{t}\mapsto \widehat{t}+a$, the rescaled surfaces evolving under null mean curvature flow will in general have two distinct behaviors.
	\end{bem}
	\begin{proof}
		Let $(\Sigma_t)$ be an ancient solution of null mean curvature flow along the de\,Sitter lightcone, and wlog we assume that the flow exists at time $0$. As described above, we can rescale $(\Sigma_t)$ such that the family of rescaled metrics $\left(\widehat{\gamma}(\,\widehat{t}\,)\right)$ is an ancient solution of $2d$-Ricci flow in the conformal class of the round sphere. Hence, by the work of Daskalopoulos--Hamiltn--Sesum \cite{daskahamilsesum} it is either a family of shrinking spheres or a King--Rosenau solution up to a suitable Lorentz transformation, which precisely acts as a M\"obius transformation on the conformally round surfaces, cf. Remark \ref{bem_gaussdesitter} (i).
		
		In particular, by Theorem \ref{thm_main1} the two ancient solutions characterized by Daskalopoulos--Hamilton--Sesum further split into three cases upon scaling back to null mean curvature flow depending on whether $\btr{\Sigma_0}<4\pi$, $\btr{\Sigma_0}=4\pi$, or $\btr{\Sigma_0}>4\pi$. Hence, there is a total of six ancient solutions. Further, Theorem \ref{thm_main1} immediately yields that four of them are eternal (in the cases when $\btr{\Sigma_0}\ge 4\pi$), and that an ancient solution with $\btr{\Sigma_0}>4\pi$, and such that it can be rescaled to a King--Rosenau solution, can not become round as $t\to\infty$. Note that as $C(0)=1$ and $\widehat{t}(0)=0$, we have $\widehat{\gamma}(0)=\gamma_0$. So conversely, all six cases indeed occur by the discussion of ancient solutions to $2d$-Ricci flow above, and scaling back to null mean curvature flow.
		
		It remains to discuss the asymptotic behaviour as $t\to-\infty$. If we denote the conformal factors of the solution by $\omega$ and the conformal factors of the rescaled surfaces evolving under $2d$-Ricci flow by $\widehat{\omega}$, then \eqref{eq_scaling_ricci} and \eqref{eq_time_ricci} yield the precise relation
		\[
		\omega(t,\cdot)=C(t)^{-1}\widehat{\omega}\left(\,\widehat{t}\,\cdot\right)=e^t\widehat{\omega}\left(\frac{1}{2}\left(1-e^{2t}\right),\cdot\right).
		\]
		Therefore, as discussed above, there exists a suitable Lorentz transformation and $\widehat{t}_0>0$ such that either
		\[
			\omega(t,\cdot)= e^t\widehat{\omega}_{Sph}(\left(\frac{1}{2}\left(1-e^{2t}\right)-\widehat{t}_0,\cdot\right)\text{\,\,\,\,\,\,\,or\,\,\,\,\,\,\, }\omega(t,\cdot)=e^t\widehat{\omega}_{K-R}(\left(\frac{1}{2}\left(1-e^{2t}\right)-\widehat{t}_0,\cdot\right).
		\]
		Using the explicit formulas $\eqref{eq_shrinkingspheres}$ and $\eqref{eq_kingrosenau}$ one can check directly that the limits as $t\to-\infty$ are independent of $t_0$, so all three families of shrinking spheres, and the three types King--Rosenau solutions have the same asmptotic behaviour in either case. In particular, in the case of spheres it is immediate to see that $\omega(t,\cdot)\to 1$ as $t\to-\infty$, so the spheres converge to the MOTS $\Sigma_1$ as claimed. We note that the limit becomes singular for a King--Rosenau solution, so the ancient solutions do not converge to a regular limiting surface in this case. Nonetheless, the area converges to $4\pi$ by Remark \ref{bem_main2}.
	\end{proof}
	
\section{Comments}\label{sec_discussion}

	For the purpose of this paper, we have considered the de\,Sitter spacetime $(\mathfrak{M},\mathfrak{g})$ as a spacetime of class $\mathcal{S}$, i.e., of the form
	\begin{align*}
		M&=\R\times(0,1)\times\Sbb^2,\\
		g&=-(1-r^2)\d t^2+\frac{1}{1-r^2}\d r^2+r^2\d\Omega^2,
	\end{align*}
	which we extended past the Killing horizon $\{r=1\}$ into a generalized Kruskal--Szekeres extension, see \cite{cedwolff}, to study the round lightcones on the full range $r\in(0,\infty)$.
	
	On the other hand, the de\,Sitter spacetime can be realized as the \emph{one-sheeted hyperboloid} or \emph{pseudosphere} $\mathbb{H}^{1,3}_1$ of radius one in the $1+4$-dimensional Minkowski spacetime $\R^{1,4}$, i.e.,
	\[
		\mathbb{H}^{1,3}_1=\{p\in \R^{1,4}\colon \eta(p,p)=1\},
	\]
	where we imbue $\mathbb{H}^{1,3}_1$ with a metric of constant sectional curvature $1$ via the induced metric from $\R^{1,4}$.
	From this, it is direct to recall the well-known fact that the full isometry group of the de\,Sitter spacetime is $O(1,4)$. Now let $X\in \mathbb{H}^{1,3}_1$, and after a suitable Lorentz transformation we may assume that $X=x_1=(0,1,0,0,0)$. Then, upon a choice of Cartesian coordinates $x_0$, $x_1$, $x_2$, $x_3$, $x_5$ with $x_1=X$, it is a well-known fact that two copies of $(\mathfrak{M},\mathfrak{g})$ can indeed be isometrically embedded into $\mathbb{H}^{1,3}_1$ via the parametrizations
	\[
		f_{1,\pm}\colon \R\times(0,1)\times\Sbb^2\to \mathbb{H}^{1,3}_1\colon (t,r,\vec{x})\mapsto \left(\pm\sqrt{1-r^2}\sinh(t),\pm\sqrt{1-r^2}\cosh(t),r\cdot\vec{x}\right),
	\]
	which yields static coordinates on $\operatorname{Im}(f_{1,\pm})\subsetneq \mathbb{H}^{1,3}_1$. Note that $f_{1,\pm}(t,r,\vec{x})\to (0,0,\vec{x})$ as $r\to1$. Now there are exactly two future-pointing null geodesic lines starting at $(0,0,\vec{x})$ with speed $\partial_0\pm\partial_1$ contained in $\mathbb{H}^{1,3}_1$, and the collection of all such null geodesics starting from $\{0\}\times\{0\}\times\Sbb^2$ form two smooth null hypersurfaces which intersect at $\{0\}\times\{0\}\times\Sbb^2$ and whose union (together with $\{r=0\}$) is the boundary of $\operatorname{Im}(f_{1,-})\cup\operatorname{Im}(f_{1,+})$ in $\mathbb{H}^{1,3}_1$. Thus, the two copies of $(\mathfrak{M},\mathfrak{g})$ are bounded by two intersecting null hypersurfaces exactly as in the generalized Kruskal--Szekeres extension. Indeed, considering the parametrizations
	\[
		f_{2,\pm}\colon \R\times(1,\infty)\times\Sbb^2\to \mathbb{H}^{1,3}_1\colon (t,r,\vec{x})\mapsto \left(\pm\sqrt{r^2-1}\cosh(t),\pm\sqrt{r^2-1}\sinh(t),r\cdot\vec{x}\right)
	\]
	which embed isometrically into $\mathbb{H}^{1,3}_1$ (equipped with the same metric $g$), and which are also bounded by the same two intersecting null hypersurfaces. We note that these null hypersurfaces correspond to the Killing horizon in the generalized Kruskal--Szekeres extension, and that their intersection $\{0\}\times\{0\}\times\Sbb^2$ corresponds to the bifurcation surface. Hence, the generalized Kruskal--Szekeres extension precisely recovers the full de\,Sitter spacetime in the sense that it covers all of $\mathbb{H}^{1,3}_1$ (except for two meridians corresponding to $\{r=0\}$). In particular, along the Killing horizon we find $\partial_u =\partial_0+\partial_1$, $\partial_v=\partial_0-\partial_1$. See Figure \ref{figure} below.
	
	\begin{figure}[H]
		\centering
		\includegraphics[scale=0.7]{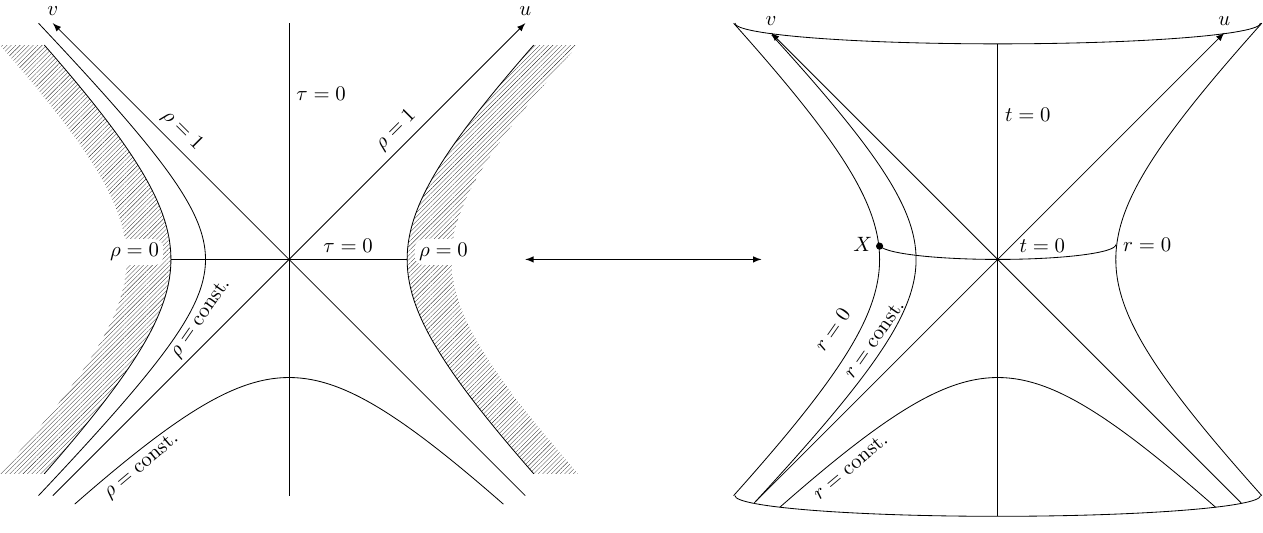}
		\caption{The generalized Kruskal--Szerekes extension of the de\,Sitter spacetime on the left, and the pseudosphere $\mathbb{H}_1^{1,3}$ on the right (supressing two spatial dimensions), with corresponding $\tau$, $\rho$ and $t$, $r$ coordinate lines.}
		\label{figure}
	\end{figure}
	
	Note moreover, that one can realize $\mathbb{H}^{1,3}_1$ as the quotient $\frac{O(1,4)}{O(1,3)}$ in the following way: For $X$ as above, consider the surjective map
	\[
		\Pi\colon =O(1,4)\to \mathbb{H}^{1,3}_1\colon L\mapsto LX.
	\]
	Now, we identify the subgroup of $O(1,4)$ that acts solely on the coordinates $x_0$, $x_2$, $x_3$, $x_4$ with $O(1,3)$. Then, we have $\Pi(L)=X$ if and only if $L\in O(1,3)$ by definition. As $\Pi(L_1\circ L_2)=\Pi(L_1)(\Pi(L_2))$, we have $\Pi(L\circ\widetilde{L})=\Pi(L)$ for all $L\in O(1,4)$, $\widetilde{L}\in O(1,3)$, so the map $\Pi(LO(1,3)):=\Pi(L)$ is well-defined and bijective. Note that both this bijective map and our choice of generalized Kruskal--Szekeres coordinates on $\mathbb{H}^{1,3}_1$ depend on our choice of $X$. In particular, as suggested by Remark \ref{bem_gaussdesitter} (i), the subgroup of the full isometry group of $\mathbb{H}^{1,3}_1$ that leaves the de\,Sitter lightcone under consideration , i.e., a principal null hypersurface in the generalized Kruskal--Szekeres spacetime, (and its orientation) invariant is the restricted Lorentz group $\operatorname{SO}^+(1,3)\subseteq \operatorname{O}(1,3)\subseteq \operatorname{O}(1,4)$, precisely as it is the case for the Minkowski lightcone. However, although $L\in \operatorname{SO}^+(1,3)$ leaves both $X$ and the lightcone invariant, it will in general not leave $\partial_0$ and therefore the Killing horizon (and the Kruskal--Szekeres extension as a whole) invariant, which leads to the plethora of MOTS observed in the de\,Sitter lightcone.
	
	This relation between the Kruskal--Szekeres extension and the pseudosphere is retained in higher dimensions. Moreover, similar to the higher dimensional Minkowski case, see \cite[Section 6]{wolff1}, null mean curvature flow along the de\,Sitter lightcone can be rescaled to the Yamabe flow in the conformal class of the round sphere $S^{n-1}$ for all $n\ge 3$. In particular, one can appeal to a short proof in the conformally round case by Brendle \cite{brendle2}.
	
	We note moreover that as $\mathbb{H}^{1,3}_1$ lies totally umbilic in $\R^{1,4}$ with umbilicity factor $1$ (with respect to a spacelike unit normal $\vec{n}$), we have that the codimension-$3$ mean curvature vector $\vec{\mathcal{H}}_3$ of any spacelike cross section of the de\,Sitter lightcone as a codimension-$3$ surface in the ambient $\R^{1,4}$ is given by
	\[
		\vec{\mathcal{H}}_3=\vec{\mathcal{H}}-\vec{n},
	\]
	and therefore 
	\[
		\eta\left(\vec{\mathcal{H}}_3,\vec{\mathcal{H}}_3\right)=\mathcal{H}^2+4=2\operatorname{R},
	\]
	cf. Corollary \ref{kor_gaussdesitter}. In this sense, we recover the Gauss equation in the Minkowski lightcone, see \cite[Equation 5]{wolff1}, also for spacelike cross sections of the de\,Sitter lightcone by considering them not as codimension-$2$ surfaces in the de\,Sitter spacetime but as codimension-$3$ surfaces in $\R^{1,4}$. Again, this relation remains true in higher dimensions $n\ge 3$ up to constants depending on $n$.
	
	Lastly, let us comment on the case of the Anti de\,Sitter spacetime. Again, we can both realize the Anti de\,Sitter spacetime as a spacetime of class $\mathcal{S}$ corresponding to 
	\[
	h\colon(0,\infty)\to\R \colon r\mapsto 1+r^2,
	\]
	and as the pseudosphere $\mathbb{H}^{2,2}_1$ in the semi-Riemannian flat manifold $\R^{2,3}$ with index $2$. Note that $h$ has no zeroes and indeed the static coordinates cover all of $\mathbb{H}^{2,2}_1$ except for a set of measure zero. Nontheless, we can consider null mean curvature flow along a round lightcone in the Anti de\,Sitter spacetime as before, and for any (conformally round) spacelike cross section the Gauss Equation then yields
	\[
		\mathcal{H}^2=2\operatorname{R}+4.
	\]
	In particular, we can realize null mean curvature flow again as a rescaling of $2d$-Ricci flow, although due to the change of sign in the constant the flow will shrink towards the tip of the cone even faster. More precisely, we have 
	\[
		\btr{\Sigma_t}=(4\pi+\btr{\Sigma_0})e^{-2t}-4\pi
	\]
	for solutions, so any solution must develop singularities in finite time. Arguing by rescaling as in Section \ref{sec_nullMCF}, one can check that qualitatively the behaviour of null mean curvature flow in the Anti de\, Sitter lightcone regarding singularity formation and ancient solutions is the same as for null mean curvature flow in the Minkowski lightcone, that is to say $2d$-Ricci flow.

\bibliography{bib_desitterlightcone}

\nopagebreak
\bibliographystyle{plain}
\end{document}